\numberwithin{equation}{section}
\begin{document}
\title {The octonionic Bergman kernel for the half space}
\author{Jinxun Wang\thanks{\scriptsize Department of
Applied Mathematics, Guangdong University of Foreign Studies, Guangzhou 510006,
China. E-mail: wjx@gdufs.edu.cn}\,, ~Xingmin
Li\thanks{\scriptsize School of Computer Sciences, South China
Normal University, Guangzhou 510631, China. E-mail: lxmin57@163.com}}
\date{}
\maketitle
\newtheorem{defi}{Definition}[section]
\newtheorem{theo}{Theorem}[section]
\newtheorem{prop}{Proposition}[section]
\newtheorem{lemm}{Lemma}[section]
\newtheorem{coro}{Corollary}[section]

\noindent\textbf{Abstract:} We obtain the octonionic Bergman kernel for half space in
the octonionic analysis setting by two different methods. As a consequence, we unify the 
kernel forms in both complex analysis and hyper-complex analysis.
\vskip 0.3cm
\noindent\textbf{Keywords:} octonions, octonionic analysis, Bergman kernel
\vskip 0.3cm
\noindent\textbf{MSC2010:} 30G35, 30H20

\section{Introduction}
In a recent paper \cite{WL} we derived the octonionic Bergman kernel for the unit ball,
based on our newly defined inner product of the octonionic Bergman space. Note that in complex analysis
the unit disc and the upper half space are holomorphically equivalent through Cayley transform. In octonionic
space there is also a similar Cayley transform mapping the unit ball onto the half space (cf. \cite{WLL}). So
the problems for the octonionic Bergman space on the half space in $\mathbb{R}^8$ naturally arise.
But unfortunately the octonionic Cayley transform is neither left $\mathbb{O}$-analytic
not right $\mathbb{O}$-analytic by our definition, and the octonions are neither commutative nor associative,
which bring barriers to the study of the problems in half space through Cayley transform.
Thus we need to investigate the case for half space by other ways.

First we accordingly have the following definition.

\begin{defi}[the octonionic Bergman space on the half space]
Let $\mathbb{R}_+^8:=\{x\in\mathbb{O}: \mbox{Re}\,x>0\}$ be the half space in $\mathbb{R}^8$,
the octonionic Bergman space $\mathcal{B}^2(\mathbb{R}_+^8)$ is the class of
left octonionic analytic functions $f$ on $\mathbb{R}_+^8$ satisfying
$$\int_{\mathbb{R}_+^8}|f|^2dV<\infty,$$
where $dV$ is the volume element on $\mathbb{R}_+^8$.
\end{defi}

The inner product is defined as the usual way.

\begin{defi}[inner product on $\mathcal{B}^2(\mathbb{R}_+^8)$]
Let $f, g\in\mathcal{B}^2(\mathbb{R}_+^8)$, we define
$$(f,g)_{\mathbb{R}_+^8}:=\frac{1}{\omega_8}\int_{\mathbb{R}_+^8}
\overline{g}f dV,$$ where $\omega_8=\frac{\pi^4}{3}$ is the surface
area of the unit sphere in $\mathbb{R}^8$.
\end{defi}

\vskip 0.3cm
By density argument and limit argument we prove the following main theorem of this paper:
\begin{theo}\label{main}
The octonionic Bergman kernel of $\mathcal{B}^2(\mathbb{R}_+^8)$ exists.
Let
$$B(x,a)=-2\frac{\partial}{\partial x_0}E(x+\overline{a}),$$
where $E(x)=\frac{\overline{x}}{|x|^8}$ is the octonionic Cauchy kernel, then $B(\cdot,a)$ is the desired
octonionic Bergman kernel, i.e., $B(\cdot,a)\in\mathcal{B}^2(\mathbb{R}_+^8)$,
and for any $f\in\mathcal{B}^2(\mathbb{R}_+^8)$ and any $a\in \mathbb{R}_+^8$, there holds the
following reproducing formula
$$f(a)=(f,B(\cdot,a))_{\mathbb{R}_+^8}.$$
Moreover, the kernel is unique.
\end{theo}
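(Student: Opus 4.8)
The plan is to first check that the candidate lies in the space, then verify the reproducing identity on a convenient dense subclass and pass to the limit, and finally dispose of uniqueness by the usual reproducing-kernel argument. Write $D=\sum_{i=0}^{7}e_i\partial_{x_i}$ for the Dirac operator (with $e_0=1$), so that $f$ is left octonionic analytic iff $D_xf=0$. Since $E$ is two-sided monogenic off the origin and the coefficients of $D$ are constant, $\partial_{x_0}$ commutes with $D$, and hence $D_xB(x,a)=-2\partial_{x_0}D_x[E(x+\overline a)]=0$; thus $B(\cdot,a)$ is left monogenic. Because $\mathrm{Re}(x+\overline a)=x_0+a_0>0$ on $\mathbb{R}_+^8$, the kernel has no singularity there, and the decay $|B(x,a)|\lesssim|x+\overline a|^{-8}$ makes $\int_{\mathbb{R}_+^8}|B(\cdot,a)|^2\,dV$ converge (the exponent $16$ beats the dimension $8$ at infinity). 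Thus $B(\cdot,a)\in\mathcal{B}^2(\mathbb{R}_+^8)$.

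For the reproducing formula the key algebraic step is to rewrite the conjugate kernel. Put $F(x):=\overline{E(x+\overline a)}=E(\overline x+a)$. Using $\partial_{x_0}=\tfrac12(D+\overline D)$ with $\overline D=\partial_{x_0}-\sum_{j\ge1}e_j\partial_{x_j}$, a direct computation via the chain rule gives $\overline D_xF=0$, so that
\[
\overline{B(x,a)}=-2\partial_{x_0}F=-D_xF=-\,FD_x,
\]
the last equality holding because $E$ is both left and right monogenic, so the two one-sided derivatives of $F$ both equal $2\partial_{x_0}F$. I would then apply the octonionic Stokes/Borel--Pompeiu formula of \cite{WL,WLL} to a large truncation of $\mathbb{R}_+^8$: since $f$ is left monogenic the interior term $F(D_xf)$ drops out, and letting the truncation exhaust the half space (the contribution of the sphere at infinity vanishing by the decay of $f$ and $F$) gives
\[
(f,B(\cdot,a))_{\mathbb{R}_+^8}=\frac{-1}{\omega_8}\int_{\mathbb{R}_+^8}(FD_x)f\,dV=\frac{-1}{\omega_8}\int_{\{x_0=0\}}F\,d\sigma\,f.
\]
On $\{x_0=0\}$ the outward normal is $-e_0$, so $d\sigma=-dS$, while $F(x)=E(\overline x+a)=E(a-x)=-E(x-a)$; substituting these identifies the boundary integral with the octonionic Cauchy integral formula and produces exactly $f(a)$.

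I would carry out the previous paragraph first for a dense subclass of $\mathcal{B}^2(\mathbb{R}_+^8)$ — functions monogenic on a slightly larger half space with enough decay — for which Stokes and the vanishing at infinity are legitimate, and then extend by a limit argument. A sub-mean-value estimate $|f(a)|\le C_{a_0}\|f\|_{\mathcal{B}^2}$ for monogenic $f$ shows both that point evaluation is continuous (so $\mathcal{B}^2(\mathbb{R}_+^8)$ is a closed subspace of $L^2$, which secures the existence assertion) and that $L^2$-convergence forces local uniform convergence; hence both sides of $f(a)=(f,B(\cdot,a))_{\mathbb{R}_+^8}$ are continuous in $f$ and the identity passes to all of $\mathcal{B}^2(\mathbb{R}_+^8)$. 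Uniqueness is then the standard argument: if $B'$ also reproduces, then $B(\cdot,a)-B'(\cdot,a)$ is orthogonal to every element of $\mathcal{B}^2(\mathbb{R}_+^8)$, in particular to itself, and therefore vanishes.

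The step I expect to be the main obstacle is the octonionic Stokes reduction, because non-associativity makes the triple products $F\,d\sigma\,f$ and $(FD_x)f$ genuinely ambiguous, and the naive Leibniz manipulation leaves a residual sum of associators $\sum_i[F,e_i,\partial_{x_i}f]$ that does not obviously cancel. Showing that this term integrates to zero (or cancels, owing to the special Cauchy-kernel form of $F$ and the monogenicity of $f$) is the delicate point, and it is precisely where the octonionic integral machinery of \cite{WL,WLL} must replace the classical Clifford calculus; the decay bookkeeping at infinity that justifies dropping the spherical part of the boundary is a secondary, more routine technicality handled by the density reduction.
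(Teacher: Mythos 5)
Your proposal is structurally sound, and most of its supporting computations are correct: the monogenicity and square-integrability of $B(\cdot,a)$, the identity $\overline{B(x,a)}=-D_xF=-FD_x$ for $F(x)=E(\overline{x}+a)$, the identification of the boundary integral over $\{x_0=0\}$ with the Cauchy integral formula, and the uniqueness argument. But the step you yourself flag as the main obstacle --- the Stokes reduction $\int_{\mathbb{R}_+^8}(FD_x)f\,dV=\int_{\{x_0=0\}}F(d\sigma f)$ --- is precisely the heart of the proof, and you leave it unresolved; as written, the proposal therefore has a genuine gap at its central step. The gap is closable, and here is how: with the parenthesization $F(d\sigma f)$, the octonionic Stokes correction is $\sum_{j}[\partial_{x_j}F,e_j,f]$ (derivatives on $F$, not on $f$), and for this particular $F$ it vanishes \emph{pointwise}. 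Indeed, writing $y=\overline{x}+a$, one has $\partial_{x_0}F=\partial_{y_0}E(y)$ and $\partial_{x_j}F=-\partial_{y_j}E(y)$ for $j\ge1$, with $\partial_{y_j}E(y)=\overline{e_j}\,|y|^{-8}-8y_j\overline{y}\,|y|^{-10}$; the terms $[\overline{e_j},e_j,f]$ die by alternativity, and the remaining sum collapses, by linearity of the associator in each slot, to a multiple of $[\overline{y},\overrightarrow{y},f]=[\overline{y},y,f]-y_0[\overline{y},1,f]=0$. This is exactly the mechanism that makes the octonionic Cauchy formula itself true, so your appeal to the ``machinery of \cite{WL,WLL}'' can be made concrete --- but it must be made, because in general the correction does not vanish: with the parenthesization you actually wrote, $\sum_j[F,e_j,\partial_{x_j}f]$, no such collapse is available, since the sum cannot be absorbed into a single argument of the associator.

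It is worth knowing that the paper's first proof avoids Stokes-with-associators altogether, and you may find its route easier to make rigorous. After the same density reduction (to $u$ monogenic on a slightly larger half space $\mathbb{R}^8_\delta$), it integrates by parts only in the scalar variable $x_0$ --- which requires nothing beyond bilinearity of the octonion product --- producing (i) a boundary term over $\{x_0=0\}$ equal to $-u(a)$ by the Cauchy formula, and (ii) a bulk term $-\frac{1}{\omega_8}\int_0^\infty\!\int_{\mathbb{R}^7}\overline{E(x_0+\overrightarrow{x}+\overline{a})}\,\partial_{x_0}u\,d\overrightarrow{x}\,dx_0$, to which the Cauchy formula applies again at each fixed height (note $x_0+\overrightarrow{x}+\overline{a}=\overrightarrow{x}+\overline{a+x_0}$, so the inner integral evaluates $\partial_{x_0}u$ at the point $a+2x_0$), giving $-\int_0^\infty\partial_{x_0}u\big|_{a+2x_0}\,dx_0=\tfrac12u(a)$; the total $-\tfrac12u(a)$ explains the normalizing factor $-2$. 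Both routes need the same tail estimates at infinity that you defer to the density step, so the essential difference is only in how non-associativity is handled: your route requires proving an associator-vanishing lemma, while the paper reuses the Cauchy integral formula, in whose proof that lemma is already built in. (A minor further caution: your remark that closedness of $\mathcal{B}^2(\mathbb{R}_+^8)$ in $L^2$ ``secures the existence assertion'' should not be read as a Riesz-representation argument --- with an octonion-valued inner product no such abstract theorem is available; existence here follows only from exhibiting and verifying the explicit kernel, which is what both you and the paper do.)
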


The rest of the paper is organized as follows. In Section 2 to make the paper self-contained we briefly review
the octonion algebra and octonionic analysis. In Section 3 we give two proofs of Theorem \ref{main}. In the
last section we point out that the Bergman kernels can be unified in one form in both complex analysis and
hyper-complex analysis.

\section{The octonions and the octonionic analysis}
Octonions, which are also called Cayley numbers or the Cayley algebra, were discovered independently by
John T. Graves and Arthur Cayley in 1843 and 1845.
In 1898, Hurwitz had proved that the real numbers
$\mathbb{R},$ complex numbers $\mathbb{C},$ quaternions $\mathbb{H}$
and octonions $\mathbb{O}$ are the only normed division algebras
over $\mathbb{R}$ (\cite{Hur}), with the embedding relation
$\mathbb{R}\subseteq \mathbb{C}\subseteq \mathbb{H}\subseteq
\mathbb{O}$.

Octonions are an 8 dimensional algebra over
$\mathbb{R}$ with the basis $e_0,e_1,\ldots,e_7$ satisfying
$$e_0^2=e_0,~e_ie_0=e_0e_i=e_i,~e_i^2=-1,~\text{for}~i=1,2,\ldots,7.$$
So $e_0$ is the unit element and can be identified with $1$.
Denote
$$W=\{(1,2,3),(1,4,5),(1,7,6),(2,4,6),(2,5,7),(3,4,7),(3,6,5)\}.$$
For any triple $(\alpha,\beta,\gamma)\in W$, we set $$e_\alpha
e_\beta=e_\gamma=-e_\beta e_\alpha,\quad e_\beta
e_\gamma=e_\alpha=-e_\gamma e_\beta,\quad e_\gamma
e_\alpha=e_\beta=- e_\alpha e_\gamma.$$
Then by distributivity for any $x=\sum_0^7 x_ie_i$,
$y=\sum_0^7 y_je_j \in \mathbb{O}$, the multiplication $xy$ is defined to be
$$xy:=\sum_{i=0}^7\sum_{j=0}^7x_iy_je_ie_j.$$

For any $x=\sum_0^7 x_ie_i \in \mathbb{O}$, $\mbox{Re}\,x:=x_{0}$ is
called the scalar (or real) part of $x$ and
$\overrightarrow{x}:=x-\mbox{Re}\,x$ is called its vector part.
$\overline{x}:=\sum_0^7x_i\overline{e_i}=x_0-\overrightarrow{x}$
and $|x|:=(\sum_0^7x_i^2)^\frac{1}{2}$ are respectively the conjugate and
norm (or modulus) of $x$, they satisfy: $|xy|=|x||y|,$
$x\overline{x}=\overline{x}x=|x|^2,$
$\overline{xy}=\overline{y}\,\overline{x}$ $(x,y\in \mathbb{O}).$ So if
$x\neq0,$ $x^{-1}=\overline{x}/{|x|^2}$ gives the inverse of $x.$

Octonionic multiplication is neither commutative nor associative. But
the subalgebra generated by any two elements is associative, namely, The octonions
are alternative. $[x, y, z]:=(xy)z-x(yz)$ is called the associator of $x, y, z\in
\mathbb{O},$ it satisfies (\cite{B1, Jaco})
$$
[x,y,z]=[y,z,x]=-[y,x,z], \quad [x,x,y]=[\overline{x},x,y]=0.
$$

As a generalization of complex analysis and quaternionic
analysis to higher dimensions, the study of octonionic analysis
was originated by Dentoni and Sce in 1973 (\cite{DS}), 
and was systematically investigated by Li et al since 1995 (\cite{Li}).
Octonionic analysis is a function theory on
octonionic analytic (abbr. $\mathbb{O}$-analytic) functions. Suppose $\Omega$ is
an open subset of $\mathbb{R}^8$,
$f=\sum_0^7f_je_j\in C^1(\Omega,\mathbb{O})$ is an octonion-valued function, if
$$Df=\sum_{i=0}^{7}e_{i}\frac{\partial f}{\partial x_{i}}
=\sum_{i=0}^{7}\sum_{j=0}^{7}\frac{\partial f_j}{\partial x_{i}}e_ie_j=0$$
$$\left(fD=\sum_{i=0}^{7} \frac{\partial f}{\partial x_{i}}e_{i}=
\sum_{i=0}^{7}\sum_{j=0}^{7}\frac{\partial f_j}{\partial x_{i}}e_je_i=0\right),$$ then $f$
is said to be left (right) $\mathbb{O}$-analytic in $\Omega$, where
the generalized Cauchy--Riemann operator $D$ and its conjugate $\overline{D}$ are defined
by $$D:=\sum_{i=0}^7e_i\frac{\partial}{\partial x_i},~~\overline{D}:=
\sum_{i=0}^7\overline{e_i}\frac{\partial}{\partial x_i}$$ respectively.
A function $f$ is $\mathbb{O}$-analytic means that $f$ is meanwhile left
$\mathbb{O}$-analytic and right $\mathbb{O}$-analytic. From
$$\overline{D}(Df)=(\overline{D}D)f=\triangle
f=f(D\overline{D})=(fD)\overline{D},$$ we know that any left (right)
$\mathbb{O}$-analytic function is always harmonic. In the sequel,
unless otherwise specified, we just consider the left
$\mathbb{O}$-analytic case as the right $\mathbb{O}$-analytic case is essentially the same.
A Cauchy-type integral formula for this setting is:

\begin{lemm}[Cauchy's integral formula, see \cite{DS,LP2}]
Let $\mathcal{M}\subset\Omega$ be an 8-dimensional, compact differentiable and oriented
manifold with boundary. If $f$ is left $\mathbb{O}$-analytic in $\Omega$, then
\begin{align*}
f(x)=\frac{1}{\omega_8}\int_{y\in\partial\mathcal{M}}E(y-x)(d\sigma_yf(y)),\quad x\in
\mathcal{M}^o,
\end{align*}
where $E(x)=\frac{\overline{x}}{|x|^8}$ is the octonionic Cauchy kernel, $d\sigma_y
=n(y)dS$, $n(y)$ and $dS$ are respectively the outward-pointing unit normal vector
and surface area element on $\partial\mathcal{M}$,
$\mathcal{M}^o$ is the interior of $\mathcal{M}$.
\end{lemm}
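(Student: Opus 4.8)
The plan is to derive the formula from an octonionic Stokes-type identity together with the classical "puncture the point and shrink" argument; the only genuinely octonionic difficulty is controlling the associator terms that non-associativity introduces into the Stokes identity.

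First I would represent the vector surface element as the octonion-valued $7$-form $d\sigma_y=\sum_{i=0}^{7}(-1)^i e_i\,d\widehat{x}_i$ (so that $d\sigma_y=n(y)\,dS$ on a boundary), where $d\widehat{x}_i$ denotes $dx_0\wedge\cdots\wedge dx_7$ with $dx_i$ deleted, and fix the kernel $g(y):=E(y-x)$ for $x\in\mathcal{M}^o$. A direct exterior-derivative computation, using $dx_j\wedge d\widehat{x}_i=(-1)^i\delta_{ij}\,dV$ and the Leibniz rule for the bilinear octonionic product, gives
$$d\bigl[g\,(d\sigma_y f)\bigr]=\Bigl[\sum_{i=0}^{7}\frac{\partial g}{\partial x_i}(e_if)+g(Df)\Bigr]dV.$$
Replacing $\frac{\partial g}{\partial x_i}(e_if)=\bigl(\frac{\partial g}{\partial x_i}e_i\bigr)f+\bigl[\frac{\partial g}{\partial x_i},e_i,f\bigr]$ turns the first sum into $(gD)f$ plus the associator correction $\sum_i\bigl[\frac{\partial g}{\partial x_i},e_i,f\bigr]$. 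Since $f$ is left $\mathbb{O}$-analytic ($Df=0$) and $E(y-x)$ is right $\mathbb{O}$-analytic in $y$ away from $y=x$ ($gD=0$), the whole expression reduces to this associator sum, which is the crux of the matter.

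The hard part is showing that $\sum_{i}\bigl[\frac{\partial g}{\partial x_i},e_i,f\bigr]$ vanishes. Writing $z=y-x$ and differentiating $g=\overline{z}/|z|^8$ yields $\frac{\partial g}{\partial x_i}=\overline{e_i}\,|z|^{-8}-8z_i\overline{z}\,|z|^{-10}$; because real scalars factor out of the associator, the sum splits as $|z|^{-8}\sum_i[\overline{e_i},e_i,f]-8|z|^{-10}\sum_i z_i[\overline{z},e_i,f]$. The first piece vanishes termwise since $[\overline{e_0},e_0,f]=[1,1,f]=0$ and $[\overline{e_i},e_i,f]=-[e_i,e_i,f]=0$ for $i\ge1$, by $[x,x,y]=0$; the second collapses, by linearity in the middle slot, to $-8|z|^{-10}[\overline{z},z,f]$, which is zero by the identity $[\overline{x},x,y]=0$ recorded in Section 2. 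Hence $d[g\,(d\sigma_y f)]=0$ on the punctured region $\mathcal{M}_\varepsilon:=\mathcal{M}\setminus\overline{B(x,\varepsilon)}$, where $g$ is smooth.

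Finally, with the integrand closed, Stokes' theorem on $\mathcal{M}_\varepsilon$ gives $\int_{\partial\mathcal{M}}E(y-x)(d\sigma_y f)=\int_{\partial B(x,\varepsilon)}E(y-x)(d\sigma_y f)$, the small sphere taken with outward (relative to $x$) orientation. On $\partial B(x,\varepsilon)$ I would set $y-x=\varepsilon\omega$ with $|\omega|=1$, so that $E(y-x)=\overline{\omega}\,\varepsilon^{-7}$, $n(y)=\omega$, and $dS=\varepsilon^{7}\,dS_1$; the powers of $\varepsilon$ cancel and the integrand becomes $\overline{\omega}\,(\omega f(x+\varepsilon\omega))$. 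Alternativity, once more through $[\overline{x},x,y]=0$, gives $\overline{\omega}(\omega f)=(\overline{\omega}\,\omega)f=f$, so the sphere integral equals $\int_{|\omega|=1}f(x+\varepsilon\omega)\,dS_1$, which tends to $\omega_8 f(x)$ as $\varepsilon\to0$ by continuity of $f$ and $\int_{|\omega|=1}dS_1=\omega_8$. Dividing by $\omega_8$ yields the stated reproducing formula. I expect the associator cancellation to be the sole octonionic obstacle; the rest is the standard Clifford-analytic argument.
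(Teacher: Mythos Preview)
Your argument is correct. The paper does not supply its own proof of this lemma: it is stated with a citation to \cite{DS,LP2} and used as a black box, so there is no in-paper proof to compare against. That said, the route you outline---an octonionic Stokes identity, reduction to $(gD)f+g(Df)$ plus an associator correction, explicit verification that $\sum_i[\partial_i E,e_i,f]$ vanishes via $[\overline{e_i},e_i,f]=0$ and $[\overline{z},z,f]=0$, then the standard puncture-and-shrink over $\partial B(x,\varepsilon)$---is exactly the strategy used in the cited sources (notably Li--Peng \cite{LP2}); the associator cancellation you single out is indeed the one genuinely octonionic step. One minor notational point: you use $x_i$ both for the coordinates of the integration variable $y$ (in $d\widehat{x}_i$ and in $\partial g/\partial x_i$) and for the fixed evaluation point $x\in\mathcal{M}^o$; this does not affect the mathematics, but relabelling the integration coordinates as $y_i$ would avoid confusion.
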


Octonionic analytic functions have a close relationship with the
Stein--Weiss conjugate harmonic systems. If the
components of $F$ consist a Stein--Weiss conjugate
harmonic system on $\Omega\subset\mathbb{R}^8$,
then $\overline{F}$ is $\mathbb{O}$-analytic on $\Omega$.
But conversely this is not true (\cite{LP1}).
For more information and recent progress about octonionic analysis, we
refer the reader to \cite{Li, LPQ1, LPQ2, LW, LZP1, LZP2, LLW, WLL}.

\section{Two proofs of Theorem \ref{main}}
\subsection{Proof of Theorem \ref{main} by density}
For $\delta>0$ we let $\mathbb{R}_{\delta}^8=\{x\in\mathbb{O}: \mbox{Re}\,x>-\delta\}$.
The set of functions that are left octonionic analytic and square integrable on $\mathbb{R}_{\delta}^8$
is denoted by $\mathcal{B}^2(\mathbb{R}_{\delta}^8)$. Similar to the harmonic Bergman space (see \cite{ABR}), we have

\begin{lemm}\label{density}
$\bigcup_{0<\delta<1}\mathcal{B}^2(\mathbb{R}_{\delta}^8)$ is dense in $\mathcal{B}^2(\mathbb{R}_+^8)$.
\end{lemm}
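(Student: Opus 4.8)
The plan is to approximate an arbitrary $f\in\mathcal{B}^2(\mathbb{R}_+^8)$ by its translates in the direction normal to the boundary, which automatically extend $\mathbb{O}$-analytically to a slightly larger half space. Concretely, for $0<t<1$ I would set $f_t(x):=f(x+te_0)$, where $e_0=1$ is the unit in the scalar $x_0$-direction. Since $x+te_0$ has real part $x_0+t$, the function $f_t$ is defined precisely when $x_0>-t$, that is on $\mathbb{R}_t^8$; so $f_t$ is a candidate element of $\mathcal{B}^2(\mathbb{R}_t^8)\subset\bigcup_{0<\delta<1}\mathcal{B}^2(\mathbb{R}_\delta^8)$, and as $t\to0^+$ the domains shrink back to $\mathbb{R}_+^8$.

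The first step is to verify that $f_t$ indeed lies in $\mathcal{B}^2(\mathbb{R}_t^8)$. Left $\mathbb{O}$-analyticity is immediate, since $D=\sum_i e_i\,\partial/\partial x_i$ has constant coefficients and therefore commutes with translation, giving $Df_t=(Df)(\,\cdot+te_0)=0$ on $\mathbb{R}_t^8$. Square-integrability follows from the translation invariance of Lebesgue measure, namely
$$\int_{\mathbb{R}_t^8}|f_t|^2\,dV=\int_{\mathbb{R}_+^8}|f|^2\,dV<\infty,$$
upon the change of variables $y=x+te_0$.

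The decisive step is to show that $f_t\to f$ in the norm of $\mathcal{B}^2(\mathbb{R}_+^8)$ as $t\to0^+$. Here I would extend $f$ by zero to all of $\mathbb{R}^8$, obtaining $\tilde f\in L^2(\mathbb{R}^8)$. For $x\in\mathbb{R}_+^8$ both $x$ and $x+te_0$ lie in the half space, so $f_t(x)-f(x)=\tilde f(x+te_0)-\tilde f(x)$, whence
$$\int_{\mathbb{R}_+^8}|f_t-f|^2\,dV\le\int_{\mathbb{R}^8}|\tilde f(x+te_0)-\tilde f(x)|^2\,dV\longrightarrow 0.$$
This last limit is exactly the continuity of translation in $L^2(\mathbb{R}^8)$.

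The main obstacle is this final limit: strictly speaking it rests on the standard fact that translation acts continuously on $L^2$, which is proved by approximating $\tilde f$ in $L^2$ by compactly supported continuous functions and using uniform continuity of the latter. The only point requiring care is that passing to the zero extension introduces no spurious contribution—this is guaranteed because, for $x_0>0$ and $t>0$, the translate $x+te_0$ remains inside $\mathbb{R}_+^8$, where $\tilde f$ agrees with $f$. Combining the two steps shows that every $f\in\mathcal{B}^2(\mathbb{R}_+^8)$ is an $L^2$-limit of elements of $\bigcup_{0<\delta<1}\mathcal{B}^2(\mathbb{R}_\delta^8)$, establishing the asserted density.
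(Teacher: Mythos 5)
Your proposal is correct and is essentially the paper's own argument: the paper likewise approximates $f$ by its translates $x\mapsto f(x+\delta)$ in the real direction, which lie in $\mathcal{B}^2(\mathbb{R}_\delta^8)$, and invokes their $L^2(\mathbb{R}_+^8)$-convergence to $f$ as $\delta\to 0^+$. The only difference is that you spell out the convergence step (zero extension plus continuity of translation in $L^2(\mathbb{R}^8)$), which the paper leaves implicit.
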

\begin{proof}
Let $u\in\mathcal{B}^2(\mathbb{R}_+^8)$. For $\delta>0$, the function $x\mapsto u(x+\delta)$ belongs to
$\mathcal{B}^2(\mathbb{R}_{\delta}^8)$. But the functions $u(x+\delta)$ converge to $u(x)$ in $L^2(\mathbb{R}_+^8)$
as $\delta\rightarrow 0$.
\end{proof}

\begin{proof}[Proof of Theorem \ref{main}]
By lemma \ref{density} we only need to check the theorem for $\mathcal{B}^2(\mathbb{R}_{\delta}^8)$ for any $\delta>0$.
Now suppose $u\in\mathcal{B}^2(\mathbb{R}_{\delta}^8)$, we have
\begin{align}
&\frac{1}{\omega_8}\int_{\mathbb{R}_+^8}\overline{\frac{\partial}{\partial x_0}E(x+\overline{a})}u(x)dV_x\nonumber
\\=&\frac{1}{\omega_8}\int_{\mathbb{R}^7}\int_0^{+\infty}\overline{\frac{\partial}{\partial x_0}E(x_0+\overrightarrow{x}+\overline{a})}u(x_0+\overrightarrow{x})dx_0d\overrightarrow{x}.\label{int1}
\end{align}
After integrating by parts in the inner integral, (\ref{int1}) becomes
\begin{align}
&\frac{1}{\omega_8}\int_{\mathbb{R}^7}\left(\overline{E(x_0+\overrightarrow{x}+\overline{a})}u(x_0+\overrightarrow{x})\Big{|}_0^{+\infty}
-\int_0^{+\infty}\overline{E(x_0+\overrightarrow{x}+\overline{a})}\frac{\partial}{\partial x_0}u(x_0+\overrightarrow{x})dx_0\right)d\overrightarrow{x}\nonumber
\\=&\frac{1}{\omega_8}\left(-\int_{\mathbb{R}^7}\overline{E(\overrightarrow{x}+\overline{a})}u(\overrightarrow{x})d\overrightarrow{x}
-\int_{\mathbb{R}^7}\int_0^{+\infty}\overline{E(x_0+\overrightarrow{x}+\overline{a})}\frac{\partial}{\partial x_0}u(x_0+\overrightarrow{x})dx_0d\overrightarrow{x}\right)\nonumber
\\=&-u(a)-\frac{1}{\omega_8}\int_0^{+\infty}\left(\int_{\mathbb{R}^7}\overline{E(x_0+\overrightarrow{x}+\overline{a})}\frac{\partial}{\partial x_0}u(x_0+\overrightarrow{x})d\overrightarrow{x}\right)dx_0\label{int2},
\end{align}
where in the first equality we use the fact that $u(x_0+\overrightarrow{x})\rightarrow 0$ as $x_0\rightarrow +\infty$, in the second equality
we apply the Cauchy integral formula to $u$ on half space $\mathbb{R}_+^8$. After applying again the Cauchy integral formula to 
the function $w\mapsto \frac{\partial u}{\partial x_0}\big{|}_{x=x_0+w}$, (\ref{int2}) becomes
\begin{align*}
&-u(a)-\frac{1}{\omega_8}\int_0^{+\infty}\left(\int_{\mathbb{R}^7}\overline{E(\overrightarrow{x}+\overline{x_0+a})}\frac{\partial u}{\partial x_0}\Big{|}_{x=x_0+\overrightarrow{x}}d\overrightarrow{x}\right)dx_0
\\=&-u(a)-\int_0^{+\infty}\frac{\partial u}{\partial x_0}\Big{|}_{x=a+2x_0}dx_0
\\=&-u(a)-\frac{1}{2}u(a+2x_0)\big{|}_0^{+\infty}
\\=&-\frac{1}{2}u(a).
\end{align*}
So the octonionic Bergman kernel $B(x,a)$ exists and equals $-2\frac{\partial}{\partial x_0}E(x+\overline{a})$.
The proof of the uniqueness of the kernel is similar to the case for the unit ball (see \cite{WL}) which we omit here.
The proof of Theorem \ref{main} is therefore complete.
\end{proof}

\subsection{Proof of Theorem \ref{main} by taking limits}

Let $B_{p,r}$ be the ball centered at the point $p\in\mathbb{R}^8$ of radius $r$, $B_{p,r}(x,a)$ be the
octonionic Bergman kernel for $B_{p,r}$. The inner product on the octonionic Bergman space $\mathcal{B}^2(B_{p,r})$ is
defined by
$$(f,g)_{B_{p,r}}:=\frac{1}{\omega_8}\int_{B_{p,r}}\left(\overline{g(x)}\frac{\overline{x-p}}{|x-p|}\right)
\left(\frac{x-p}{|x-p|}f(x)\right)dV,$$
Then we have

\begin{lemm}\label{translation}
The octonionic Bergman kernel $B_{p,r}(x,a)$ is given by
$$B_{p,r}(x,a)=\frac{1}{r^8}B_{0,1}\left(\frac{x-p}{r},\frac{a-p}{r}\right),$$ where
$$B_{0,1}(x,a)=\frac{\left(6(1-|a|^2|x|^2)+2(1-\overline{x}a)\right)(1-\overline{x}a)}
{|1-\overline{x}a|^{10}}$$
is the octonionic Bergman kernel for the unit ball given in \cite{WL}.
\end{lemm}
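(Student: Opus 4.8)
The goal is a scaling-and-translation (covariance) formula for the Bergman kernel, so the natural strategy is to show that the proposed right-hand side reproduces functions and lands in the space, and then invoke the uniqueness of the Bergman kernel (which, as remarked in the first proof, parallels the unit-ball case in \cite{WL}). Concretely, I would fix $p$ and $r$ and define the affine change of variables $\Phi(x)=\frac{x-p}{r}$, which carries $B_{p,r}$ bijectively onto the unit ball $B_{0,1}$. The plan is to verify the reproducing identity
\[
f(a)=\frac{1}{\omega_8}\int_{B_{p,r}}\Bigl(\overline{B_{p,r}(x,a)}\,\tfrac{\overline{x-p}}{|x-p|}\Bigr)\Bigl(\tfrac{x-p}{|x-p|}f(x)\Bigr)\,dV_x
\]
directly, by pulling back to the unit ball and using that $B_{0,1}$ already reproduces there.

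The key steps, in order, are as follows. First I would record how each ingredient transforms under $\Phi$: the volume element picks up a factor $r^8$, so $dV_x=r^8\,dV_y$ with $y=\Phi(x)$; the radial unit $\frac{x-p}{|x-p|}$ equals $\frac{y}{|y|}$ and is invariant; and the candidate kernel satisfies $B_{p,r}(x,a)=r^{-8}B_{0,1}(\Phi(x),\Phi(a))$ by definition. Second, given $f\in\mathcal{B}^2(B_{p,r})$, I would introduce $g(y):=f(p+ry)$, check that $g$ is left $\mathbb{O}$-analytic on $B_{0,1}$ (the Cauchy--Riemann operator $D$ is translation-invariant and scales homogeneously, so left $\mathbb{O}$-analyticity is preserved under the affine map), and confirm $g\in\mathcal{B}^2(B_{0,1})$. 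Third, substituting all of this into the integral above, the $r^{-8}$ from the kernel cancels the $r^8$ from $dV$, and the inner-product integrand converts exactly into the unit-ball integrand for $g$ at the point $\Phi(a)$; applying the unit-ball reproducing property from \cite{WL} then yields $g(\Phi(a))=f(a)$, which is the desired identity. Finally, since the kernel that reproduces is unique, the reproducing function I have exhibited must equal $B_{p,r}(x,a)$.

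The main obstacle I anticipate is purely algebraic rather than analytic: because the octonions are nonassociative, one must be careful that the bracketing in the integrand $\bigl(\overline{g}\,\tfrac{\overline{x-p}}{|x-p|}\bigr)\bigl(\tfrac{x-p}{|x-p|}f\bigr)$ is preserved verbatim under the substitution, and that the factor $r^{-8}$ from the kernel — a \emph{real} scalar — can be pulled out of the octonionic products without disturbing the remaining nonassociative multiplication. Real scalars are central and associate freely, so this extraction is legitimate, but the bookkeeping must be done explicitly rather than by appeal to a general change-of-variables lemma. Once the invariance of $\frac{x-p}{|x-p|}$ and the centrality of real scalars are used to match the two integrands term by term, the rest reduces to the established unit-ball case and the uniqueness already proved, completing the argument.
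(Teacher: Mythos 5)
Your proposal is correct and is essentially the paper's own proof: the paper likewise passes between $B_{p,r}$ and $B_{0,1}$ via the affine substitution $y = rx+p$ (equivalently your $\Phi$), uses the dilation/translation invariance of left $\mathbb{O}$-analyticity and the Jacobian factor $r^8$, applies the unit-ball reproducing formula from \cite{WL}, and identifies the resulting reproducing function with $B_{p,r}(x,a)$ by uniqueness. The only cosmetic difference is direction — the paper pushes the unit-ball formula forward to $B_{p,r}$, while you pull the candidate identity back to the unit ball — and your explicit remarks on the centrality of real scalars under the nonassociative product are a careful elaboration of steps the paper performs silently.
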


\begin{proof}
If $f(x)\in\mathcal{B}^2(B_{p,r})$, then $f(rx+p)\in\mathcal{B}^2(B_{0,1})$. So for any $b\in B_{0,1}$ we have
$$f(rb+p)=\frac{1}{\omega_8}\int_{B_{0,1}}\left(\overline{B_{0,1}(x,b)}\frac{\overline{x}}{|x|}\right)
\left(\frac{x}{|x|}f(rx+p)\right)dV_x.$$
Let $rb+p=a$, $rx+p=y$, then $a, y\in B_{p,r}$, and the above reproducing formula becomes
$$f(a)=\frac{1}{r^8\omega_8}\int_{B_{p,r}}\left(\overline{B_{0,1}\left(\frac{y-p}{r},\frac{a-p}{r}\right)}\frac{\overline{y-p}}{|y-p|}\right)
\left(\frac{y-p}{|y-p|}f(y)\right)dV_y.$$
\end{proof}

Note that $\mathbb{R}_+^8=\bigcup_{r=1}^\infty B_{r,r}$. Similar to the harmonic Bergman space (see \cite{ABR}), we have
\begin{lemm}\label{limit}
The octonionic Bergman kernel $B(x,a)$ for $\mathbb{R}_+^8$ satisfies
$$B(x,a)=\lim_{r\rightarrow\infty}B_{r,r}(x,a)$$
for all $x, y\in\mathbb{R}_+^8$.
\end{lemm}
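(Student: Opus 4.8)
The plan is to realize $B(\cdot,a)$ as a locally uniform limit of the ball kernels through a normal-families argument, exploiting the exhaustion $B_{r,r}\uparrow\mathbb{R}_+^8$ (recall $B_{r,r}=\{x:|x|^2<2rx_0\}$, which increases to $\{x_0>0\}$). The geometric point that drives everything is the behaviour of the rotation factor $u_r(x):=\frac{x-re_0}{|x-re_0|}$ appearing in the ball inner product: for fixed $x\in\mathbb{R}_+^8$ one has $u_r(x)\to -e_0=-1$ as $r\to\infty$, uniformly on compact sets, so that the ball inner product $(\,\cdot\,,\cdot\,)_{B_{r,r}}$ degenerates into the half space inner product $(\,\cdot\,,\cdot\,)_{\mathbb{R}_+^8}$. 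A second useful fact, following from alternativity, is that for $g=f$ the rotation factors cancel: writing $u=u_r(x)$, the integrand $(\overline{f}\,\overline{u})(uf)=\overline{(uf)}(uf)=|uf|^2=|f|^2$, so both $\|\cdot\|_{B_{r,r}}^2$ and $\|\cdot\|_{\mathbb{R}_+^8}^2$ are the (normalized) plain $L^2$ norms over their respective domains. Thus the whole family $\{B_{r,r}(\cdot,a)\}_r$ lives in one ambient $L^2$ space, and it remains to extract and identify its limit.

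First I would record the pointwise estimate. Since left $\mathbb{O}$-analytic functions are harmonic, the mean value property together with Cauchy--Schwarz gives, for every compact $K\subset\mathbb{R}_+^8$, a constant $C_K$ with $\sup_{x\in K}|f(x)|\le C_K\,\|f\|_{L^2}$; once $K\subset B_{r,r}$ for all large $r$, the same constant works uniformly in $r$. Next I would bound $\sup_r B_{r,r}(a,a)=\sup_r\|B_{r,r}(\cdot,a)\|_{B_{r,r}}^2$. This can be done either abstractly---the extremal characterization $B_{\rho,\rho}(a,a)=\sup\{|f(a)|^2:\|f\|_{B_{\rho,\rho}}\le1\}$ together with the plain-$L^2$ identity above makes $r\mapsto B_{r,r}(a,a)$ nonincreasing---or concretely, by inserting $x=a$ into the explicit formula of Lemma \ref{translation}: with $b=\frac{a-re_0}{r}$ one has $1-|b|^2\sim \frac{2a_0}{r}$, and the resulting near-boundary blow-up of $B_{0,1}(b,b)$ of order $r^{8}$ is exactly cancelled by the prefactor $r^{-8}$, so $B_{r,r}(a,a)$ stays bounded. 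Either way, the family is $L^2$-bounded; combined with the pointwise estimate it is a normal family, so some subsequence $B_{r_k,r_k}(\cdot,a)$ converges locally uniformly to a left $\mathbb{O}$-analytic function $g$, and by Fatou $g\in\mathcal{B}^2(\mathbb{R}_+^8)$.

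To identify $g$, I would test against the dense class $\bigcup_{0<\delta<1}\mathcal{B}^2(\mathbb{R}_{\delta}^8)$ of Lemma \ref{density}. For such an $f$ and $r_k$ large enough that $a\in B_{r_k,r_k}$, the reproducing property of $B_{r_k,r_k}$ reads
\[
f(a)=\frac{1}{\omega_8}\int_{B_{r_k,r_k}}\left(\overline{B_{r_k,r_k}(x,a)}\,\overline{u_{r_k}(x)}\right)\left(u_{r_k}(x)f(x)\right)dV_x .
\]
Letting $k\to\infty$, the integrand converges pointwise to $\overline{g(x)}\,f(x)$ (using $u_{r_k}\to-1$ and $B_{r_k,r_k}(\cdot,a)\to g$), while its modulus equals $|B_{r_k,r_k}(x,a)|\,|f(x)|$; local uniform convergence handles compact sets and the $L^2$-boundedness of the kernels against the rapid decay of $f$ controls the tails, so by a dominated convergence argument $f(a)=\frac{1}{\omega_8}\int_{\mathbb{R}_+^8}\overline{g}\,f\,dV=(f,g)_{\mathbb{R}_+^8}$. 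By density and the continuity of point evaluation this reproducing identity extends to all $f\in\mathcal{B}^2(\mathbb{R}_+^8)$, whence $g=B(\cdot,a)$ by the uniqueness already established for Theorem \ref{main}. Since every locally uniform subsequential limit is forced to be $B(\cdot,a)$, the full family converges, giving $B(x,a)=\lim_{r\to\infty}B_{r,r}(x,a)$ for all $x$.

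I expect the main obstacle to be the limit passage in the displayed reproducing formula: one must control, simultaneously, the degeneration $u_{r_k}(x)\to-1$, the expansion of the domains $B_{r_k,r_k}\uparrow\mathbb{R}_+^8$, and the convergence $B_{r_k,r_k}(\cdot,a)\to g$, all inside an octonion-valued integral. Because multiplication is non-associative, the bracketing $(\overline{B}\,\overline{u})(uf)$ must be preserved and simplified only through alternativity (as in the norm cancellation above) rather than by free regrouping; and the dominated/uniform-integrability estimate needs the $L^2$-boundedness of $\{B_{r_k,r_k}(\cdot,a)\}$ to be genuinely uniform in $k$, which is exactly what the normal-family bound of the previous step supplies.
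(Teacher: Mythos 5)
Your argument is correct, but it is not the paper's: the paper offers no proof of Lemma \ref{limit} at all --- it is asserted by analogy with the harmonic Bergman space, with a bare citation to \cite{ABR}, and the paper's effort in the second proof of Theorem \ref{main} goes entirely into the subsequent explicit computation of the limit. Your normal-families argument is a legitimate way to fill this hole, and it buys something the paper's phrasing leaves dangling: as stated, the lemma presupposes that the half-space kernel $B(\cdot,a)$ exists (in the paper this is supplied only by the first, density-based proof), whereas your argument proves existence en route, since the subsequential limit $g$ is itself shown to satisfy the reproducing identity on all of $\mathcal{B}^2(\mathbb{R}_+^8)$. Three small refinements. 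First, the identity $(\overline{f}\,\overline{u})(uf)=|f|^2$ needs only $\overline{xy}=\overline{y}\,\overline{x}$ and $\overline{z}z=|z|^2$, so no genuine appeal to alternativity is required. Second, in the limit passage no ``rapid decay'' of $f$ is available or needed: the tail is controlled by Cauchy--Schwarz, $\int_{B_{r_k,r_k}\setminus K}|B_{r_k,r_k}(x,a)|\,|f(x)|\,dV \le \bigl(\sup_k\|B_{r_k,r_k}(\cdot,a)\|_{L^2}\bigr)\,\|f\|_{L^2(\mathbb{R}_+^8\setminus K)}$, which uses only $f\in L^2(\mathbb{R}_+^8)$ together with $B_{r_k,r_k}\subset\mathbb{R}_+^8$; in particular you could test against arbitrary $f\in\mathcal{B}^2(\mathbb{R}_+^8)$ and dispense with Lemma \ref{density} altogether. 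Third, your appeal to uniqueness is not circular: uniqueness follows directly from additivity of the inner product in the second slot together with $(h,h)_{\mathbb{R}_+^8}=\omega_8^{-1}\int|h|^2\,dV$, independently of either existence argument, so every subsequential limit is forced to equal $B(\cdot,a)$ and the full limit exists, as you conclude.
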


\begin{proof}[Proof of Theorem \ref{main}]
By Lemma \ref{translation} and Lemma \ref{limit} we get
\begin{align*}
B(x,a)=&\lim_{r\rightarrow\infty}\frac{1}{r^8}B_{0,1}\left(\frac{x-r}{r},\frac{a-r}{r}\right)
\\=&\lim_{r\rightarrow\infty}\frac{\left(6\left(1-\frac{|a-r|^2|x-r|^2}{r^4}\right)+2\left(1-\frac{\overline{x-r}(a-r)}{r^2}\right)\right)
\left(1-\frac{\overline{x-r}(a-r)}{r^2}\right)}{r^8\left|1-\frac{\overline{x-r}(a-r)}{r^2}\right|^{10}}
\\=&\lim_{r\rightarrow\infty}\frac{\left(6\left(r-\frac{|a-r|^2|x-r|^2}{r^3}\right)+2\left(r-\frac{\overline{x-r}(a-r)}{r}\right)\right)
\left(r-\frac{\overline{x-r}(a-r)}{r}\right)}{\left|r-\frac{\overline{x-r}(a-r)}{r}\right|^{10}}.
\end{align*}
But
\begin{align*}
&\lim_{r\rightarrow\infty}\left(r-\frac{\overline{x-r}(a-r)}{r}\right)
\\=&\lim_{r\rightarrow\infty}\frac{r^2-\overline{x-r}(a-r)}{r}
\\=&\lim_{r\rightarrow\infty}\frac{(a+\overline{x})r-\overline{x}a}{r}
\\=&a+\overline{x},
\end{align*}
and
\begin{align*}
&\lim_{r\rightarrow\infty}\left(r-\frac{|a-r|^2|x-r|^2}{r^3}\right)
\\=&\lim_{r\rightarrow\infty}\frac{r^4-|(a-r)(x-r)|^2}{r^3}
\\=&\lim_{r\rightarrow\infty}\frac{r^4-|r^2-(a+x)r+ax|^2}{r^3}
\\=&\lim_{r\rightarrow\infty}\frac{r^4-(r^2-(a+x)r+ax)(r^2-(\overline{a}+\overline{x})r+\overline{ax})}{r^3}
\\=&a+x+\overline{a}+\overline{x}
\\=&2\mbox{Re}(a+\overline{x}).
\end{align*}
Hence,
$$B(x,a)=\frac{\left(12\mbox{Re}(a+\overline{x})+2(a+\overline{x})\right)(a+\overline{x})}{|a+\overline{x}|^{10}}
=-2\frac{\partial}{\partial x_0}\left(\frac{\overline{x+\overline{a}}}{|x+\overline{a}|^8}\right).$$
\end{proof}

\section{Final remarks}
The arguments in the above proofs can be also applied to both complex analysis and
Clifford analysis, we therefore can unify the Bergman kernels and reproducing formulas
in complex and hyper-complex contexts. Let $\mathscr{A}$ denote the
complex algebra or hyper-complex algebra, i.e., $\mathscr{A}$ may
refer to complex numbers $\mathbb{C}$, quaternions $\mathbb{H}$,
octonions $\mathbb{O}$, or Clifford algebra $\mathscr{C}$. Assume
that the dimension of the underlying space of $\mathscr{A}$ is $m$,
$\mathbb{R}^m_+:=\{x\in\mathscr{A}: \mbox{Re}\,x>0\}$ is the half space
in $\mathbb{R}^m$. Then for any function $f$ which belongs to the Bergman space
$\mathcal{B}^2(\mathbb{R}^m_+)$ and any
point $a\in \mathbb{R}^m_+$, there holds
\begin{align*}
f(a)&=(f,B(\cdot,a))_{\mathbb{R}^m_+} \\
&=\frac{1}{\omega_m}\int_{\mathbb{R}^m_+}\overline{B(x,a)}f(x)dV
\\&=\frac{1}{\omega_m}\int_{\mathbb{R}^m_+}
\overline{-2\frac{\partial}{\partial x_0}\left(\frac{\overline{x+\overline{a}}}{|x+\overline{a}|^m}\right)}f(x)dV
\\&=\frac{1}{\omega_m}\int_{\mathbb{R}^m_+}
\frac{\left(2(m-2)\mbox{Re}(x+\overline{a})+2(x+\overline{a})\right)(x+\overline{a})}{|x+\overline{a}|^{m+2}}f(x)dV,
\end{align*}
where $\omega_m$ is the surface
area of the unit sphere in $\mathbb{R}^m$, $dV$ is the volume
element on $\mathbb{R}^m_+$.

\vskip 0.8cm \noindent{\Large\textbf{Acknowledgements}}

\vskip 0.3cm \noindent
This work was supported by the National Natural Science
Foundation of China (No. 11701105). The authors would like to thank
the anonymous referees for their helpful comments and suggestions.

\end{document}